\theoremstyle{plain}
\newtheorem{thm}{Theorem}[section]
\newtheorem{lem}[thm]{Lemma}
\newtheorem{prop}[thm]{Proposition}
\theoremstyle{definition}
\newtheorem*{Ack}{Acknowledgement}
\newtheorem{deff}[thm]{Definition}
\newtheorem{rem}[thm]{Remark}
\newtheorem{WTthm}[thm]{Taft-Wilson Theorem}
\newtheorem{QuantumBC}[thm]{Quantum binomial coefficients}
\theoremstyle{remark}
\newtheorem{note}[thm]{}
\def\C{\mathbb{C}}
\def\P{\mathcal{P}}
\def\U{\mathcal{U}}
\newcommand*{\e}{\ensuremath{\varepsilon}}
\def\dim{\operatorname{dim}}
\def\chara{\operatorname{char}}
\def\f{\frac}
\def\ds{\displaystyle}
\begin{document}
\thispagestyle{empty}

\title[Pointed Hopf algebras of dimension $p^2$]{Classification of pointed Hopf algebras of dimension $p^2$ over any algebraically closed field}

\author{Linhong Wang}
\author{Xingting Wang*}

\address{Department of Mathematics\\
  Southeastern Louisiana University\\
  Hammond, LA 70402}
\email{lwang@selu.edu}

\address{Department of Mathematics\\
University of Washington\\
Seattle, WA 98195}
\email{xingting@uw.edu}

\thanks{* corresponding author, partially supported by U.~S.~National Science Foundation. Research of the first author supported by the Louisiana BoR [LEQSF(2012-15)-RD-A-20].}

\keywords{Hopf algebras}

\subjclass[2010]{16T05}

\begin{abstract}
Let $p$ be a prime. We complete the classification of pointed Hopf algebras of dimension $p^2$ over an algebraically closed field $k$. When $\chara k \neq p$, our result is the same as the well-known result for $\chara k=0$. When $\chara k=p$, we obtain 14 types of pointed Hopf algebras of dimension $p^2$, including a unique noncommutative and noncocommutative type.
\end{abstract}

\maketitle

\section{Introduction}
\thispagestyle{empty}
In this paper, we let $p$ be a prime number, $k$ an algebraically closed field. Any vector spaces, algebras, coalgebras, Hopf algebras, linear maps, and tensor products are over $k$. Also, let $C_{p^n}$ be the cyclic group of order $p^n$ for a positive integer $n$. Our aim is to classify pointed Hopf algebras of dimension $p^2$ over $k$ of positive characteristic.

\begin{note}
Most studies on classification of finite-dimensional Hopf algebras are over an algebraically closed field of characteristic zero, concerning various cases including dimensions $p^n$, $pq$, $p^2q$ for a prime $q\neq p$.  In general, Hopf algebras of dimension $3p$ are not classified. Other than dimension 24, all Hopf algebras over $\C$ of dimension less than 32 are classified; see  \cite{andruskiewitsch2000finite, BeattieSurvey2009, BeaGar2011, BeaGar2012, ChengNg2011, ng2008}. The chronology relevant to our present purpose can be summarized as follows, assuming $\chara k=0$. (1) In 1994, Zhu \cite{zhu1994hopf} proved the Kaplansky's conjecture, that is, any $p$-dimensional Hopf algebra is isomorphic to the group algebra $k[C_p]$. (2) In 1996, Masuoka \cite{masuoka1996pn} proved that any semisimple $p^2$-dimensional Hopf algebra must be isomorphic to the group algebras $k[C_{p^2}]$ or $k[C_p\times C_p]$. (3) In 2002, using results from \cite{andruskiewitsch1998hopf} and \cite{stefan1997hopf}, Ng \cite{ng2002non} gave an affirmative answer to the question raised by Susan Montgomery asking whether the Taft algebras are the only non-semisimple Hopf algebras of dimension $p^2$. (4) In 2008, Scherotzke \cite{SchPrankOne} classified the finite-dimensional pointed rank one Hopf algebras for $\chara k=p$. A pointed \emph{rank one} Hopf algebra $H$, introduced in \cite{KropRadford}, is generated as an algebra by the first term $H_1$ of its coradical filtration, and $\dim \,k\otimes_{H_0}H_1=2$, where $k$ is regarded as the trivial right $H_0$-module and $H_1$ is regarded as a left $H_0$-module by multiplication in $H$.
\end{note}

\begin{note}
When $\chara k\neq p$, we prove in Theorem \ref{neqThm} that any $p^2$-dimensional pointed Hopf algebra is isomorphic to either a group algebra or a Taft algebra. This can be seen as a generalization of the results for $\chara k=0$ obtained in \cite{masuoka1996pn} and \cite{ng2002non}. In conclusion, there are $p+1$ types of such pointed Hopf algebras. \par

When $\chara k=p$, the classification of connected Hopf algebras of dimension $p^2$ is known from a previous work \cite{Wan} by the second author. In fact, there are eight types A1-A8 of such connected Hopf algebras; see Proposition \ref{connected}. In this paper, we prove that any non-connected pointed Hopf algebra must be isomorphic to a group algebra $k[C_{p^2}]$ or $k[C_p\times C_p]$, or a Hopf algebra of type B1-B4 as in Theorem \ref{Thethm}, that is,
\begin{itemize}
\item[(B1)] $k[ g,\, x ]\, / \, \left( g^p-1,\, x^p\right)$,
\item[(B2)] $k[ g,\, x ]\, / \, \left( g^p-1,\, x^p-x\right)$,
\end{itemize}
with the Hopf algebra structure determined by
\begin{align*}
\Delta(g)&=g\otimes g,   \hspace{.75in}  \epsilon(g)=1, \quad  S(g)=g^{-1},\\
\Delta(x)&=x\otimes 1 + 1\otimes x, \quad  \epsilon(x)=0, \quad S(x)=-x,
\end{align*}
\begin{itemize}
\item[(B3)] $k[ g,\, x ]\, / \, \left( g^p-1,\,  x^p\right)$,
\item[(B4)] $k\langle g,\, x \rangle\, / \, \left( g^p-1,\, gx-xg-g(g-1),\, x^p-x\right)$,
\end{itemize}
with the Hopf algebra structure determined by
\begin{align*}
 \Delta(g)&=g\otimes g,   \hspace{.75in}  \epsilon(g)=1, \quad S(g)=g^{-1},\\
\Delta(x)&=x\otimes 1 + g\otimes x, \quad  \epsilon(x)=0,  \quad S(x)=-g^{-1}x.
\end{align*}
Therefore, when $\chara k=p$, there are $14$ types of such pointed Hopf algebras. Among them, the type B4 is the only one which is noncommutative and noncocommutative. This Hopf algebra appears in the work of Radford as an example of pointed Hopf algebra of dimension $p^2$ with antipode of order $2p$; see \cite{Rad1977}. Moreover, Hopf algebras of B3 or B4 type also appear in the work of Scherotzke as examples of pointed rank one Hopf algebras of the second type \cite[Definition 3.8]{SchPrankOne} and the third type \cite[Example 4.4 and 4.5]{SchPrankOne}, respectively.
\end{note}

There are non-pointed examples of Hopf algebras of dimension $p^3$ when $\chara k=0$, e.g., the semisimple Hopf algebra of dimension $8$ in \cite{Mas}, which is noncommutative and noncocommutative. For Hopf algebras of dimension $p$ or $p^2$, we ask the following question parallel to the one asked by Susan Montgomery in the characteristic zero case. This question is a generalized Kaplansky's conjecture for the case of positive characteristic.

\begin{note}
\noindent \textbf{Question:} Is it true that any Hopf algebras of dimension $p$ or $p^2$ over an algebraically closed field $k$ must be pointed?
\end{note}

\begin{Ack}
We would like to acknowledge Professor Quanshui Wu and Professor James Zhang for their valuable suggestions made while the authors were attending the 2012 BIRS workshop, New Trends in Noncommutative Algebra and Algebraic Geometry, at Banff, Canada. We are also thankful to Cris Negron and Guangbin Zhuang for helpful conversations. We would like to express our gratitude to the referee for a very careful reading of the first version of the paper, and to Professor Siu-Hung Ng for pointing out the reference \cite{Rad1977} to us.
\end{Ack}

\section{Preliminary}
We use the standard notation $(H,\,m,\,u,\,\Delta,\,\e,\,S)$ to denote a Hopf algebra $H$ over $k$, where $m: H\otimes H \to H$ is the multiplication map, $u: k\to H$ is the unit map, $\Delta: H\to H\otimes H$ is the comultiplication map, $\e: H\to k$ is the counit map, and $S: H\to H$ is the antipode. We first recall some basic definitions and facts regarding $H$.

\begin{deff}\cite[Definitions 5.1.5, 5.2.1]{Mon}
The \emph{coradical} $H_0$ of $H$ is the sum of all simple subcoalgebras of $H$. The Hopf algebra $H$ is \emph{pointed} if every simple subcoalgebra is one-dimensional of the form $kg$ where $g$ is a group-like element, and $H$ is \emph{connected} if $H_0$ is one-dimensional. For each $n\geq 1$, set  \[H_n=\Delta^{-1}(H\otimes H_{n-1} + H_0\otimes H).\] The chain of subcoalgebra $H_0\subseteq H_1 \subseteq \ldots \subseteq H_{n-1}\subseteq H_n \subseteq\ldots $ is the \emph{coradical filtration} of $H$, and we have \[H=\bigcup_{n\geq 0}H_n\quad \text{and} \quad \Delta(H_n)\subseteq \sum_{i=0}^n\, H_i\otimes H_{n-i}.\] If $H$ is pointed, then the coradical $H_0$ is the group algebra $kG$ generated by group-like elements, which is a Hopf subalgebra of $H$.\end{deff}

\begin{deff}\cite[Definitions 1.3.4]{Mon}
Let $g$ and $h$ be group-like elements in $H$. Let \[\P_{g,\,h}(H)=\{x\in H\, |\; \Delta(x)=x\otimes g + h\otimes x\},\] the \emph{subspace of $(g,\,h)$-primitive elements} in $H$. In particular, $\P_{1,\,1}(H)$ is the \emph{space of primitive elements} in $H$. We will only write $\P_{g,\,h}$ if $H$ is clear from the context.
\end{deff}

\begin{WTthm} \cite[Theorem 5.4.1]{Mon}\label{WTthm}
If $H$ is a pointed Hopf algebra with the set of group-like elements $G$, then \[H_1=H_0\oplus \Big(\oplus_{g,\,h\in G}\, \P'_{g,\,h}\Big)\] where $\P'_{g,\,h}$ is a subspace of $\P_{g,\,h}$ such that
$\P_{g,\,h}=\P'_{g,\,h} \oplus k(g-h)$.
\end{WTthm}

\begin{QuantumBC}\label{quanBino}
Suppose $H$ is a Hopf algebra with $\chara k\neq p$. Let $A$ and $B$ be two elements in $H$ such that $BA=\omega AB$, where $\omega$ is a primitive $p\,$th root of unity. Then we can expand $(A + B)^p$ by using the quantum binomial coefficients, that is,  \[(A + B)^p=\sum_{i=0}^p\, \binom{p}{i}_{\omega}\, A^{p-i}B^i,\]
where $\ds{\binom{p}{i}_{\omega}=\f{(p\,!)_{\omega}}{((p-i)\, !)_{\omega}(i\,!)_{\omega}}}$ with $\ds{(j\, !)_{\omega}=\prod_{i=1}^j\, \f{1-\omega^i}{1-\omega}}$ and $(0\, !)_{\omega}=1$.
It is clear that all $\ds{\binom{p}{i}_{\omega}=0}$ except  $\ds{\binom{p}{0}_{\omega}=\binom{p}{p}_{\omega}=1}$. Therefore
$(A+B)^p =A^p+B^p$.
\end{QuantumBC}

\begin{note} \textbf{Taft algebras.} \cite{Taf}\label{Taftalgebra}
Assume that $\chara k \neq p$. Let $\omega$ be a $p\,$th primitive root of unity. The Taft algebra $T_{p,\, \omega}$ is defined as
\[k\langle g,\, x \rangle\, / \, \left( g^p -1,\, x^p,\, gx-\omega xg \right)\]
with the Hopf algebra structure determined by
\begin{align*}
\Delta(g)&=g\otimes g,      \hspace{.75in}  \epsilon(g)=1, \quad S(g)=g^{-1},\\
\Delta(x)&=x\otimes 1 + g\otimes x, \quad  \epsilon(x)=0,  \quad S(x)=-g^{-1}x.
\end{align*}
It is well known that $T_{p,\, \omega}$ is $p^2$-dimensional and pointed.
\end{note}

By Nichols and Zoeller's theorem \cite{NicZoe}, a finite-dimensional Hopf algebra $H$ is free over any Hopf subalgebra $K$. Hence $\dim K$ divides $\dim H$. This argument appears in the proof of the next lemma and in the paragraph preceding Theorem \ref{neqThm}.

\begin{lem}\label{DimPrimSpace}
Let $H$ be a Hopf algebra of dimension $p^n$ for a positive integer $n$. \emph{(i)} If $\chara k\neq p$, then $\P_{1,\,1} $ must be a zero space. \emph{(ii)} When $\chara k=p$, $0\leq \dim \P_{1,\,1}  \leq n$. If we further assume that $H$ is not connected, then $\dim \P_{1,\,1}  < n$.
\end{lem}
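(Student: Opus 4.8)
The plan is to handle the two parts separately, in each case reducing a statement about the primitive space $\P_{1,\,1}$ to a divisibility constraint coming from the Nichols--Zoeller theorem \cite{NicZoe}.

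For part (i), I would argue by contradiction, assuming $0\neq x\in\P_{1,\,1}$. Since $x$ is primitive, the subalgebra $k[x]$ it generates is a Hopf subalgebra, and it is finite-dimensional because $H$ is; let $m=\dim k[x]$, so that $\{1,x,\dots,x^{m-1}\}$ is a basis and $x^m=\sum_{l<m}c_l x^l$ for scalars $c_l$. Applying $\e$ rules out $m=1$, so $m\geq 2$. The key step is to apply $\Delta$ to this relation. Because $x\otimes 1$ and $1\otimes x$ commute, the ordinary binomial theorem gives $\Delta(x^m)=\sum_{i=0}^m\binom{m}{i}x^i\otimes x^{m-i}$, whereas expanding $\Delta\big(\sum_{l<m}c_l x^l\big)$ produces only tensors $x^j\otimes x^{l-j}$ with $j+(l-j)=l\leq m-1$. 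Comparing the coefficient of the basis vector $x\otimes x^{m-1}$ of $k[x]\otimes k[x]$ --- which occurs with coefficient $\binom{m}{1}=m$ on the first expansion (the boundary terms $1\otimes x^m$ and $x^m\otimes 1$ reduce to tensors having a factor $1$ in one slot, so they do not contribute when $m\geq 2$) and with coefficient $0$ on the second (total degree $\leq m-1$) --- forces $m=0$ in $k$, i.e. $\chara k\mid m$. On the other hand, Nichols--Zoeller gives $m\mid p^n$, so $m=p^j$ with $j\geq 1$; this is incompatible with $\chara k\mid m$ both when $\chara k=0$ (as $m\geq 2$) and when $\chara k=\ell$ is a prime $\neq p$ (as then $\ell\mid p^j$). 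Hence no nonzero primitive exists and $\P_{1,\,1}=0$.

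For part (ii) assume $\chara k=p$. Here $L:=\P_{1,\,1}$ is a finite-dimensional restricted Lie algebra: it is closed under the commutator bracket and, by the Frobenius identity $(x\otimes 1+1\otimes x)^p=x^p\otimes 1+1\otimes x^p$, under the $p$-power map. I would form its restricted enveloping algebra $u(L)$, which is a connected Hopf algebra of dimension $p^{\dim L}$ whose primitive space is exactly $L$. The inclusion $L\hookrightarrow H$ is a morphism of restricted Lie algebras, so by the universal property it extends to a Hopf algebra map $\phi\colon u(L)\to H$. The claim is that $\phi$ is injective: since $u(L)$ is connected and $\phi$ is injective on $k\oplus L$, the first term of the coradical filtration of $u(L)$ (being the identity on $k$ and the inclusion on $L$), injectivity follows from the standard fact that a coalgebra map out of a connected coalgebra that is injective on the first term of its coradical filtration is injective \cite{Mon}. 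Thus $u(L)$ is a Hopf subalgebra of $H$, and Nichols--Zoeller gives $p^{\dim L}\mid p^n$, i.e. $\dim\P_{1,\,1}\leq n$, while $\dim\P_{1,\,1}\geq 0$ is trivial.

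Finally, suppose in addition that $H$ is not connected. If we had $\dim\P_{1,\,1}=n$, then $\dim u(L)=p^n=\dim H$ together with the injection $u(L)\hookrightarrow H$ would force $u(L)=H$; but $u(L)$ is connected, so $H$ would be connected, a contradiction. Hence $\dim\P_{1,\,1}<n$. I expect the main obstacle to be the injectivity of $\phi$, namely verifying that the restricted enveloping algebra of the primitives genuinely embeds in $H$; everything else is bookkeeping with dimensions and the Nichols--Zoeller divisibility. The connectedness together with injectivity on the first filtration term is precisely the criterion that should resolve this point.
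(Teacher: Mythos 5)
Your proof is correct. For part (ii) you follow essentially the same route as the paper: form the restricted enveloping algebra $\mathfrak{u}(\P_{1,\,1})$, note it is connected of dimension $p^{\dim \P_{1,\,1}}$, and invoke Nichols--Zoeller; the paper asserts the embedding $\mathfrak{u}(\P_{1,\,1})\hookrightarrow H$ without comment, whereas you justify it via the criterion that a coalgebra map out of a connected coalgebra which is injective on the first term of the coradical filtration is injective --- a detail worth making explicit. For part (i), however, your argument takes a genuinely different route. The paper splits into $\chara k=0$ (where a nonzero primitive would generate an infinite-dimensional $\U(\P_{1,\,1})$ inside the finite-dimensional $H$) and $\chara k=q\neq 0,p$ (where $\dim \mathfrak{u}(\P_{1,\,1})=q^m$ must divide $p^n$). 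You instead take a single nonzero primitive $x$, observe that $k[x]$ is a Hopf subalgebra of some dimension $m\geq 2$, and extract from the coefficient of $x\otimes x^{m-1}$ in $\Delta(x^m)$ the conclusion that $m=0$ in $k$; combined with $m\mid p^n$ from Nichols--Zoeller this is impossible unless $\chara k=p$. Your route is more elementary --- it needs no enveloping algebras at all in part (i) and treats characteristic $0$ and characteristic $q\neq p$ uniformly --- at the cost of a slightly fussier coefficient computation, which you carry out correctly: the boundary terms $x^m\otimes 1$ and $1\otimes x^m$, once rewritten via the minimal polynomial, cannot contribute to $x\otimes x^{m-1}$ when $m\geq 2$.
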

\begin{proof}
Suppose $\chara k=0$. If $\P_{1,\,1} $ is nonzero, then $H$ contains an infinite-dimensional Hopf subalgebra (i.e., the universal enveloping algebra $\U(\P_{1,\,1} )$), contradiction. Hence $\P_{1,\,1} =0$. Now suppose $\chara  k=q\neq 0$ for some prime $q$. Then, the dimension of the restricted enveloping algebra $\mathfrak{u}(\P_{1,\,1} )$ divides the dimension of $H$ (i.e., $q^m\, |\, p^n$), where $m=\dim \P_{1,\,1}$. Therefore $\P_{1,\,1} =0$ if $q\neq p$. On the other hand, if $q=p$ we have $0\leq \dim \P_{1,\,1}  \leq n$. Note that $\mathfrak{u}(\P_{1,\,1} )$ is connected. This completes the proof.
\end{proof}

\begin{rem}\label{nonconnected}
It follows from the preceding lemma that a Hopf algebra of dimension $p^n$ with $\chara k\neq p$ can not be connected unless it is of dimension one.
\end{rem}

\begin{lem}\label{ConjugationScalar}
Let $H$ be a finite-dimensional pointed Hopf algebra such that the set of the group-like elements $G$ is an abelian $p$-group. Suppose $g, h\in G$. Then $kG$ acts on the $(g, h)$-primitive space $\P_{g,\,h} $ by conjugation and $k(g-h)$ is a $kG$-submodule of $\P_{g,\,h} $. Moreover,
\begin{itemize}
\item[(i) ] If $\chara k\neq p$, then there is some $x\in \P_{g,\,h} $ and a group character $\chi_x: G \rightarrow k^{\times}$ such that $fxf^{-1}=\chi_x(f) x$ for any $f\in G$.
\item[(ii)] If $\chara k=p$, then there is some $x\in \P_{g,\,h} $ and a group homomorphism $\rho_x: G \rightarrow k^{+}$ such that $fxf^{-1}-x=\rho_x(f)(g-h)$ for any $f\in G$.
\end{itemize}
Therefore, if $\P_{g,\,h} /k(g-h)\neq 0$, there always exists $x\in H\setminus H_0$ satisfying \emph{(i)} or \emph{(ii)}.
\end{lem}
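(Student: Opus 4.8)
The plan is to first establish the $kG$-module structure by a direct computation with the comultiplication, then treat the two characteristic cases separately using the representation theory of the abelian $p$-group $G$ over $k$, and finally extract the vector $x \notin H_0$ needed for the closing statement.

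First I would verify that conjugation preserves $\P_{g,\,h}$. For $f\in G$ and $x\in\P_{g,\,h}$, using that $\Delta$ is an algebra map and that $f$ is group-like,
\[ \Delta(fxf^{-1}) = (f\otimes f)(x\otimes g + h\otimes x)(f^{-1}\otimes f^{-1}) = fxf^{-1}\otimes fgf^{-1} + fhf^{-1}\otimes fxf^{-1}, \]
which, since $G$ is abelian, equals $fxf^{-1}\otimes g + h\otimes fxf^{-1}$; hence $fxf^{-1}\in\P_{g,\,h}$, so conjugation makes $\P_{g,\,h}$ a $kG$-module. The same abelianness gives $f(g-h)f^{-1}=g-h$, so $k(g-h)$ is a pointwise-fixed $kG$-submodule. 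I would also record the fact needed for the last sentence, namely $\P_{g,\,h}\cap H_0 = k(g-h)$: since $\P_{g,\,h}\subseteq H_1$, the Taft-Wilson Theorem gives $\P_{g,\,h}=\P'_{g,\,h}\oplus k(g-h)$ with $\P'_{g,\,h}$ lying in a complement of $H_0$ inside $H_1$, so any $(g,h)$-primitive element of $H_0$ must already lie in $k(g-h)$.

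For \emph{(i)}, when $\chara k\neq p$ the order $|G|$ is invertible in $k$, so $kG$ is semisimple by Maschke's theorem; as $G$ is abelian and $k$ is algebraically closed, $\P_{g,\,h}$ decomposes into a direct sum of one-dimensional $kG$-modules, each afforded by a group character $G\to k^{\times}$, and choosing $x$ in any such line gives $fxf^{-1}=\chi_x(f)x$. For \emph{(ii)}, when $\chara k=p$ the algebra $kG$ is local with the trivial module as its only simple, so every $f\in G$ acts unipotently. If $\P_{g,\,h}=k(g-h)$ one simply takes $x=g-h$ and $\rho_x=0$; otherwise I would pass to the nonzero quotient module $V:=\P_{g,\,h}/k(g-h)$ and invoke the standard fact that a nonzero module over a finite $p$-group in characteristic $p$ has a nonzero fixed vector, producing $\bar x\neq 0$ in $V^G$. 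Any lift $x$ then satisfies $fxf^{-1}-x\in k(g-h)$, which defines a scalar $\rho_x(f)$ by $fxf^{-1}-x=\rho_x(f)(g-h)$.

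The crux, which I expect to be the main (if short) obstacle, is showing $\rho_x$ is a homomorphism into $(k,+)$. Inserting $f_2xf_2^{-1}=x+\rho_x(f_2)(g-h)$ into $(f_1f_2)x(f_1f_2)^{-1}-x$ and using $f_1(g-h)f_1^{-1}=g-h$ yields
\[ \rho_x(f_1f_2)(g-h) = (f_1f_2)x(f_1f_2)^{-1}-x = \big(\rho_x(f_1)+\rho_x(f_2)\big)(g-h), \]
so $\rho_x(f_1f_2)=\rho_x(f_1)+\rho_x(f_2)$ whenever $g\neq h$ (the case $g=h$ being degenerate since then $g-h=0$). Finally, for the concluding statement I assume $\P_{g,\,h}/k(g-h)\neq 0$: in case \emph{(i)} the semisimple decomposition provides an eigenvector $x$ outside the trivial line $k(g-h)$ (either in a nontrivial character space or in $V^G$ strictly beyond $k(g-h)$), and in case \emph{(ii)} the nonzero fixed vector $\bar x$ lifts to $x\notin k(g-h)$. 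In both cases $x\notin H_0$ because $\P_{g,\,h}\cap H_0=k(g-h)$, which gives the desired $x\in H\setminus H_0$.
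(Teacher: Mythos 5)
Your proposal is correct and follows essentially the same route as the paper: the same direct computation that conjugation preserves $\P_{g,\,h}$, Maschke/semisimplicity of $kG$ for $\chara k\neq p$, the local structure of $kG$ (equivalently, existence of a fixed vector in the quotient $\P_{g,\,h}/k(g-h)$) for $\chara k=p$, and the same telescoping computation to show $\rho_x$ is additive. Your explicit observation that $\P_{g,\,h}\cap H_0=k(g-h)$ via the Taft--Wilson decomposition is a welcome extra step justifying $x\in H\setminus H_0$, which the paper leaves implicit.
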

\begin{proof}
For any $x\in \P_{g,\,h}$, direct computation
\begin{align*}
\Delta(fxf^{-1})&=\Big(f\otimes f\Big) \Big(x\otimes g + h\otimes x\Big)  \Big(f^{-1}\otimes f^{-1}\Big) \\
                   &=fxf^{-1}\otimes fgf^{-1} + fhf^{-1}\otimes fxf^{-1}\\
                   &=fxf^{-1}\otimes g + h\otimes fxf^{-1}
\end{align*}
shows that $fxf^{-1}\in \P_{g,\,h} $. Hence $kG$ acts on $\P_{g,\,h} $ by conjugation. It is clear that $k(g-h)$ is also a $kG$-module by conjugation and $g-h\in \P_{g,\,h}$. Hence $k(g-h)$ is a simple $kG$-submodule of $\P_{g,\,h} $.

(i) $\chara k \neq p$. Then $\chara k\, \nmid\, |G|$, and so $kG$ is commutative semisimple. There exists an eigenvector $x\in \P_{g,\,h} $ such that $fxf^{-1}=\chi_x (f)x$, for some mapping $\chi_x: G\rightarrow k$. Note that $\chi_x (f_1 f_2)=\chi_x (f_1)\chi_x (f_2)$ for any $f_1, f_2 \in G$ and that $fxf^{-1}=0$ would imply $x=0$ if $\chi_x (f)=0$. Hence $\chi_x$ is a multiplicative character on $G$.

(ii) $\chara k=p$.
Denote by $I$ the augmentation ideal of $kG$, that is, the kernel of the augmentation $\e: kG\to k$, $\e(g) = 1$ for all $g \in G$.
Note that $I$ is the unique maximal ideal of $kG$.
Hence any simple $kG$-module is isomorphic to $kG/I$ and one-dimensional.
If $\P_{g,\,h} /k(g-h)$ is the zero space, we choose $x=g-h$ and $\rho_x=0$.
In the rest of the proof, assume that $\P_{g,\,h} /k(g-h)\ne 0$. We can choose $x\in \P_{g,\,h}$ such that the $kG$-submodule of $\P_{g,\,h} /k(g-h)$ generated by $\bar{x}$ is simple and then is isomorphic to $kG/I$.
Hence $Ix \subseteq k(g-h)$, that is, $fxf^{-1}-x =\rho_x (f) (g-h)$ for some mapping $\rho_x: G\rightarrow k$.
To see that $\rho_x$ is an additive character on $G$, we take $f_1, f_2\in G$.
Then we have
\begin{equation} f_2xf_2^{-1}-x=\rho_x(f_2)(g-h)\quad \text{and}
\end{equation}
\begin{equation}
(f_2f_1)x(f_2f_1)^{-1} - x=\rho_x(f_2f_1)(g-h).
\end{equation}
On the other hand, it follows from $f_1xf_1^{-1}-x=\rho_x(f_1)(g-h)$ that
\begin{equation}
(f_2f_1)x(f_2f_1)^{-1} - f_2xf_2^{-1}=\rho_x(f_1)(g-h).
\end{equation}
Therefore, comparing the equations (1)+(3) with (2), we have
\[\Big[\rho_x(f_1f_2)-\rho_x(f_2)-\rho_x(f_1)\Big](g-h)=0.\]
If $g\neq h$, then $\rho_x(f_1f_2)=\rho_x(f_1)+\rho_x(f_2)$. If $g=h$, then $\rho_x$ can be any map.
\end{proof}

\section{Main results}
Throughout this section, $H$ is a pointed Hopf algebra with $\dim H =p^2$. We proceed by the dimension of the coradical $H_0$. Note that $\dim H_0$ divides $p^2$. Hence we have $\dim H_0= 1, p,\, \text{or}\, p^2$. In particular, if $\dim H_0=p^2$ then $H=H_0=kG$, where $G\cong C_p\times C_p$ or $C_{p^2}$. The following theorem provides the classification of $H$ when the characteristic is zero or a prime not equal to $p$.

\begin{thm}\label{neqThm}
If $\chara k \neq p$, then $H$ is isomorphic to a Taft algebra $T_{p,\,\omega}$ or a group algebra $kG$ where $G\cong C_p\times C_p$ or $C_{p^2}$.
\end{thm}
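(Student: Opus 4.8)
The plan is to argue by the dimension of the coradical $H_0=kG$. Since $kG$ is a Hopf subalgebra, the Nichols--Zoeller theorem forces $\dim H_0\in\{1,p,p^2\}$, and by Remark~\ref{nonconnected} the case $\dim H_0=1$ is excluded (here $\chara k\neq p$ and $\dim H=p^2>1$, so $H$ is not connected). If $\dim H_0=p^2$ then $H=H_0=kG$ with $|G|=p^2$, and as the only groups of order $p^2$ are $C_{p^2}$ and $C_p\times C_p$, we land in the group algebra case. The entire content therefore lies in the case $\dim H_0=p$, where $G=\langle g\rangle\cong C_p$ and the goal is to prove $H\cong T_{p,\omega}$.

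In that case $H_0\neq H$, so the coradical filtration strictly increases and $H_1\neq H_0$; by the Taft--Wilson Theorem~\ref{WTthm} there is a pair $a,b\in G$ with $\P_{a,b}/k(a-b)\neq 0$. Applying Lemma~\ref{ConjugationScalar}(i) (legitimate since $G\cong C_p$ is an abelian $p$-group) I would extract $x_0\in H\setminus H_0$ and a character $\chi$ with $fx_0f^{-1}=\chi(f)x_0$ for all $f\in G$. Replacing $x_0$ by $x:=a^{-1}x_0\in H_1\setminus H_0$ makes $x$ a $(1,a^{-1}b)$-primitive eigenvector; moreover $a^{-1}b\neq 1$, since otherwise $x$ would be a nonzero primitive element, contradicting $\P_{1,1}=0$ from Lemma~\ref{DimPrimSpace}(i). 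As $a^{-1}b$ then generates $C_p$, I relabel it $g$, so that $\Delta(x)=x\otimes 1+g\otimes x$ and $gxg^{-1}=\omega x$ with $\omega:=\chi(g)$ satisfying $\omega^p=1$. The Hopf subalgebra $k\langle g,x\rangle$ contains $kG$ properly, so by Nichols--Zoeller its dimension divides $p^2$ and exceeds $p$, hence equals $p^2$; thus $H=k\langle g,x\rangle$, subject at least to $g^p=1$ and $gx=\omega xg$.

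The decisive point, which I expect to be the main obstacle, is that $\omega$ is a \emph{primitive} $p$-th root of unity, i.e. $\omega\neq 1$. Granting this, the relation $BA=\omega AB$ for $A=x\otimes 1$ and $B=g\otimes x$ together with the Quantum Binomial Coefficients~\ref{quanBino} gives $\Delta(x^p)=A^p+B^p=x^p\otimes 1+1\otimes x^p$, so $x^p\in\P_{1,1}=0$ and hence $x^p=0$. Then $g,x$ satisfy the defining relations of the Taft algebra~\ref{Taftalgebra}, yielding a surjective Hopf map $T_{p,\omega}\twoheadrightarrow H$ between algebras of equal dimension $p^2$, so $H\cong T_{p,\omega}$, as claimed.

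To rule out $\omega=1$ I would pass to the associated graded Hopf algebra $R=\gr H$, again pointed of dimension $p^2$ with group $G$, and let $\xi$ be the nonzero image of $x$ in $R_1$, so that $\Delta(\xi)=\xi\otimes 1+g\otimes\xi$ and (under the hypothesis $\omega=1$) $g\xi=\xi g$. As before, the Hopf subalgebra $k\langle g,\xi\rangle$ equals $R$, its homogeneous components are $kG\,\xi^{j}$, and Nichols--Zoeller freeness over $kG$ forces the nilpotency index of $\xi$ to be exactly $p$; thus $\xi^{p}=0$ while $\xi^{p-1}\neq 0$. Applying $\Delta$ to $\xi^{p}=0$ and reading off the homogeneous component of bidegree $(p-1,1)$ in $R\otimes R$ leaves the single term $p\,\xi^{p-1}g\otimes\xi$, which is nonzero because $\chara k\neq p$ and $\xi^{p-1}\neq 0$ --- contradicting $\Delta(\xi^{p})=0$. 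Hence $\omega\neq 1$, completing the argument. The delicate part is precisely this exclusion of $\omega=1$: when $\omega$ is primitive the quantum binomial kills all middle terms and everything is forced, but for $\omega=1$ one must produce the obstruction by hand, and the graded picture is what makes the offending term visible and nonvanishing exactly when $\chara k\neq p$.
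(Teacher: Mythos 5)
Your proposal follows the paper's skeleton almost exactly --- trichotomy on $\dim H_0$ via Nichols--Zoeller, Taft--Wilson plus Lemma~\ref{ConjugationScalar}(i) to produce a $(1,g)$-primitive eigenvector $x$ with $gxg^{-1}=\omega x$ and $\omega^p=1$, then the quantum binomial identity~\ref{quanBino} to force $x^p\in\P_{1,1}=0$ --- but it diverges at the one genuinely delicate point, the exclusion of $\omega=1$, and there the two arguments are really different. The paper observes that if $\omega=1$ then $g$ is central, so $(G-1)H$ is a Hopf ideal and $H/(G-1)H$ is a $p$-dimensional Hopf algebra containing the nonzero primitive $\bar{x}$, contradicting Lemma~\ref{DimPrimSpace}(i); this is short and recycles a lemma already in hand (its only unstated point is that $x\notin(G-1)H$). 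You instead pass to $R=\gr H$, use Nichols--Zoeller freeness of $R$ over the semisimple algebra $kG\cong k^{p}$ to pin the nilpotency index of $\xi$ at exactly $p$, and then read off the nonvanishing bidegree-$(p-1,1)$ term $p\,\xi^{p-1}g\otimes\xi$ of $\Delta(\xi^{p})=0$. This works, and it has the virtue of making visible exactly where $\chara k\neq p$ enters; but the step ``freeness forces the nilpotency index to be exactly $p$'' is doing real work and deserves a sentence: writing $kG=\prod_i ke_i$ with primitive idempotents $e_i$ (possible only because $\chara k\nmid p$), freeness gives $\dim e_iR=p$ for each $i$, while $e_iR=\oplus_j\,k\,e_i\xi^{j}$ and $e_i\xi^{j}=0$ implies $e_i\xi^{j+1}=0$, so each $e_i\xi^{j}\ne 0$ precisely for $0\le j\le p-1$, whence $\xi^{p-1}\ne 0$ and $\xi^{p}=0$. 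With that filled in, your argument is complete; on balance the paper's quotient trick is more economical, while yours is more explicit and self-contained at the cost of the extra module-theoretic bookkeeping.
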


\begin{proof}
Assume $\chara k \neq p$. By Remark \ref{nonconnected}, $\dim H_0>1$. Then, it is sufficient to show, when $\dim H_0=p$, that $H$ is isomorphic to a Taft algebra $T_{p,\,\omega}$. Now suppose that $H_0=k\langle g \rangle$ for some $g^p=1$. By Theorem \ref{WTthm}, there exist integers $u$ and $v$ such that
\[\P_{g^u,\, g^v} /k(g^u-g^v) \neq 0.\]
By left multiplication of $g^{-u}$ and applying Lemma \ref{DimPrimSpace} (i), we can assume without loss of generosity that $u=0$ and $v=1$. Then, by Lemma \ref{ConjugationScalar} (i), we have $kG$ acts on $\P_{1,\,g} $ by conjugation and there is some $x\in \P_{1,\,g} \setminus H_0$ such that $gxg^{-1}=\chi_x(g)x$, where $\chi_x: G \rightarrow k^{\times}$ is a group character. Set $\omega=\chi_x(g)$. Then \[\omega^p=\chi_x(g)^p=\chi_x(g^p)=\chi_x(1)=1.\]
Thus $\omega$ is a $p\,$th root of unity. We claim that $\omega\neq 1$. Otherwise, we have $gx=xg$ and so $(G-1)H$ is a Hopf ideal of $H$. Then  $0\neq\bar{x}$ is primitive in the quotient Hopf algebra $H/(G-1)H$, a contradiction to Lemma \ref{DimPrimSpace} (i). Therefore, $\omega$ must be primitive. Set $A=x\otimes 1$ and $B=g\otimes x$. It is easy to see that $BA=\omega AB$. Then, it follows from \ref{quanBino} that
\[\Delta(x^p)=\Delta(x)^p=(x\otimes 1 + g\otimes x)^p=(x\otimes 1)^p + (g\otimes x)^p=x^p\otimes 1 + 1\otimes x^p.\]
Thus $x^p\in \P_{1,\,1}$. By Lemma \ref{DimPrimSpace} (i), we have $x^p=0$. Hence,
\[H=k\langle g,\, x \rangle\, / \, \left( g^p -1,\, x^p,\, gx-\omega xg \right).\]
It can be verified that $H$ is isomorphic to the Taft Hopf algebra $T_{p,\,\omega}$ as in \ref{Taftalgebra}. The theorem is proved.
\end{proof}

Next, we consider the case when $\chara k=p$. In \cite{Wan}, the second author has classified all \emph{connected} Hopf algebras of dimension $p^2$.

\begin{prop}\cite[Theorem 6.4]{Wan}\label{connected}
Let $\chara k=p$ and $H$ be a connected Hopf algebra of dimension $p^2$. Then $H$ is isomorphic to one of the eight types of Hopf algebras presented by generators and relations as follows.

\emph{(i)} Further assume that $\dim \P_{1,\,1}=2$.
\begin{itemize}
\item[(A1)] $k\left[x,\, y\right]/\left(x^p,\, y^p\right)$,
\item[(A2)] $k\left[x,\, y\right]/\left(x^p-x,\, y^p\right)$,
\item[(A3)] $k\left[x,\, y\right]/\left(x^p-y,\, y^p\right)$,
\item[(A4)] $k\left[x,\, y\right]/\left(x^p-x,\, y^p-y\right)$,
\item[(A5)] $k\langle x,\, y\rangle/\left([x,\,y]-y,\, x^p-x,y^p\right)$,
\end{itemize}
where $x$ and $y$ are primitive (i.e.,
$\Delta\left(x\right)=x\otimes 1+1\otimes x$ and $\Delta\left(y\right)=y\otimes 1+1\otimes y$).

\emph{(ii)} Further assume that  $\dim \P_{1,\,1}=1$.
\begin{itemize}
\item[(A6)] $k\left[x,\, y\right]/(x^p,\, y^p)$,
\item[(A7)] $k\left[x,\, y\right]/(x^p,\, y^p-x)$,
\item[(A8)] $k\left[x,\, y\right]/(x^p-x,\, y^p-y)$,
\end{itemize}
where $\Delta\left(x\right)=x\otimes 1+1\otimes x$ and $\Delta\left(y\right)=y\otimes 1+1\otimes y+\sum_{i=1}^{p-1}{p\choose i}\big/p\ x^i\otimes x^{p-i}$.
\end{prop}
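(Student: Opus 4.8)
The plan is to argue by the value of $\dim \P_{1,\,1}$. By Lemma \ref{DimPrimSpace}(ii) this dimension is at most $2$, and since $H$ is connected with $\dim H=p^{2}>1$, the Taft--Wilson decomposition $H_{1}=H_{0}\oplus\P_{1,\,1}$ of Theorem \ref{WTthm} forces $\P_{1,\,1}\neq 0$; thus $\dim\P_{1,\,1}\in\{1,2\}$, which is exactly the split into cases (i) and (ii). The basic device is the restricted Lie algebra $\mathfrak g=\P_{1,\,1}$: it is closed under the bracket and under the $p$-power map, because a bracket of primitives is primitive and, since $\binom{p}{i}=0$ in $k$ for $0<i<p$, a $p$-th power of a primitive is again primitive. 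The resulting Hopf map $\mathfrak u(\mathfrak g)\to H$ from the restricted enveloping algebra restricts to an isomorphism on primitive spaces, hence is injective because $\mathfrak u(\mathfrak g)$ is connected, and $\dim\mathfrak u(\mathfrak g)=p^{\dim\mathfrak g}$.

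When $\dim\P_{1,\,1}=2$, the injection $\mathfrak u(\mathfrak g)\hookrightarrow H$ is a map between spaces of equal dimension $p^{2}=\dim H$, so $H\cong\mathfrak u(\mathfrak g)$ and the problem reduces to classifying $2$-dimensional restricted Lie algebras over $k$. Their underlying Lie algebra is either abelian or the nonabelian $[x,y]=y$. In the nonabelian case, $\ad(x^{[p]})=\ad(x)^{p}=\ad(x)$ together with triviality of the center pins down $x^{[p]}=x$, $y^{[p]}=0$, giving A5. In the abelian case the $p$-operation is a Frobenius-semilinear endomorphism $\xi$ of a $2$-dimensional space, and classifying such operators up to semilinear conjugacy over the algebraically closed field $k$ produces exactly four normal forms ($\xi=0$; $\xi$ bijective; $\xi$ of rank one with a nonzero fixed line; $\xi$ nilpotent of rank one), which are precisely A1, A4, A2, A3.

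When $\dim\P_{1,\,1}=1$, write $\P_{1,\,1}=kx$, so that $\mathfrak u(\mathfrak g)=k[x]/(x^{p}-\lambda x)$ is only a $p$-dimensional Hopf subalgebra and $H$ is a nontrivial extension of it. Here I would pass to the associated graded $\gr H$ for the coradical filtration, which is commutative since $[H_{i},H_{j}]\subseteq H_{i+j-1}$, and is connected graded of dimension $p^{2}$ with $\gr^{1}=kx$. Borel's structure theorem writes its underlying algebra as a tensor product of truncated polynomial algebras, and the coradical grading (all primitives in degree one) rules out $k[x]/(x^{p^{2}})$ and forces $\gr H\cong k[x]/(x^{p})\otimes k[y]/(y^{p})$ with $\Deg y=p$; coassociativity then pins the new coproduct to the truncated divided-power form $\Delta(y)=y\otimes 1+1\otimes y+\sum_{i=1}^{p-1}\binom{p}{i}\big/p\;x^{i}\otimes x^{p-i}$. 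Lifting back to $H$, one checks that $x$ and $y$ commute (their commutator is primitive, hence in $kx$, and is forced to vanish), so that in the commutative algebra $H$ the characteristic-$p$ freshman's dream yields $\Delta(x^{p})=x^{p}\otimes 1+1\otimes x^{p}$ and $\Delta(y^{p})=y^{p}\otimes 1+1\otimes y^{p}+\eta^{p}$, where $\eta$ is the cocycle above. The first equation gives $x^{p}=\lambda x$, normalized to $\lambda\in\{0,1\}$; evaluating $\eta^{p}$ via $x^{p}=\lambda x$ shows $y^{p}$ is primitive when $\lambda=0$ and $y^{p}-y$ is primitive when $\lambda=1$, and after a rescaling together with an Artin--Schreier substitution $y\mapsto y+ax$ this produces exactly the relations of A6 ($x^{p}=y^{p}=0$), A7 ($x^{p}=0$, $y^{p}=x$) and A8 ($x^{p}=x$, $y^{p}=y$).

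The crux is the $\dim\P_{1,\,1}=1$ analysis: I must show that coassociativity, together with commutativity of $\gr H$, leaves no freedom in the coproduct of $y$ beyond the divided-power cocycle, i.e., that the relevant space of coassociative $2$-cocycles on $k[x]/(x^{p})$ modulo coboundaries is one-dimensional. This is exactly what prevents additional, possibly noncommutative, connected Hopf algebras of dimension $p^{2}$ from appearing. By contrast, the Frobenius-semilinear normal forms underlying A1--A4 should be routine, and the final step --- checking that the eight algebras are pairwise non-isomorphic --- is straightforward: $\dim\P_{1,\,1}$ separates A1--A5 from A6--A8, while within each family the underlying algebra (local versus semisimple, the dimension of $\mathfrak m/\mathfrak m^{2}$ for the local ones, and commutativity for A5) distinguishes the remaining types.
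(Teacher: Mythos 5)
First, a caveat on the comparison itself: the paper does not prove this proposition --- it is imported verbatim from \cite{Wan} (the second author's earlier paper), so there is no in-text proof to measure your argument against; I can only assess your outline on its own terms. Your treatment of case (i) is essentially complete and correct: the embedding $\mathfrak{u}(\P_{1,1})\hookrightarrow H$ (injective because it is injective on primitives of a connected Hopf algebra) forces $H\cong\mathfrak{u}(\mathfrak{g})$ when $\dim\P_{1,1}=2$, the nonabelian restricted structure is pinned down by $\ad(x^{[p]})=\ad(x)^p$ and triviality of the center, and the four Frobenius-semilinear normal forms on $k^2$ do give A1--A4. This is the standard route and I see no gap there.

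Case (ii) is where the real content lies, and your write-up has two genuine gaps. The first you name yourself: everything hinges on the symmetric $2$-cocycle computation --- that for the coalgebra $k[x]/(x^p)$ the space of coassociative cocycles $\sum_i c_i\,x^i\otimes x^{e-i}$ modulo coboundaries vanishes for $2\le e\le p-1$ and for $p<e\le 2p-2$, and is one-dimensional (spanned by the divided-power cocycle) exactly for $e=p$. Without this (a truncated version of Lazard's symmetric $2$-cocycle lemma) you cannot conclude that $\Deg y=p$ nor that $\Delta(y)$ has the stated form, and this is precisely what excludes further connected types; flagging it as ``the crux'' does not discharge it. The second gap is your parenthetical claim that $[x,y]=0$ in $H$ because the commutator ``is primitive, hence in $kx$, and is forced to vanish.'' The first two clauses are fine, but the vanishing is not automatic: $[y,x]=\mu x$ with $\mu\ne 0$ is perfectly consistent with $\Delta$ being an algebra map, so you must rule it out. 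One way: if $[y,x]=\mu x$ with $\mu\neq 0$, one checks the cocycle $\eta$ squares to zero and commutes with $y\otimes1+1\otimes y$, so $\Delta(y^p)=y^p\otimes1+1\otimes y^p$ and $y^p\in\P_{1,1}=kx$; but then $\ad(y)^p=\ad(y^p)$ kills $x$, whereas $\ad(y)^p(x)=\mu^p x\ne 0$, a contradiction. Some such argument must be supplied before the freshman's-dream computation of $\Delta(x^p)$ and $\Delta(y^p)$, and hence the reduction to A6--A8, is legitimate. The concluding normalizations (rescaling and the Artin--Schreier substitution) are routine, as you say.
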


The following theorem provides a complete list of pointed Hopf algebras of dimension $p^2$ with $\chara k=p$.

\begin{thm}\label{Thethm}
If $\chara k= p$, then $H$ is isomorphic to a group algebra $kG$ where $G\cong C_p\times C_p$ or $C_{p^2}$, a Hopf algebra as in Proposition \emph{\ref{connected}}, or a Hopf algebra presented by generators and relations as follows:
\begin{itemize}
\item[(B1)] $k[ g,\, x ]\, / \, \left( g^p-1,\, x^p\right)$,
\item[(B2)] $k[ g,\, x ]\, / \, \left( g^p-1,\, x^p-x\right)$,
\end{itemize}
with the Hopf algebra structure determined by
\begin{align*}
\Delta(g)&=g\otimes g,   \hspace{.75in}  \e(g)=1, \quad  S(g)=g^{-1},\\
\Delta(x)&=x\otimes 1 + 1\otimes x, \quad  \e(x)=0, \quad S(x)=-x,\; \text{or}
\end{align*}
\begin{itemize}
\item[(B3)] $k[ g,\, x ]\, / \, \left( g^p-1,\,  x^p\right)$,
\item[(B4)] $k\langle g,\, x \rangle\, / \, \left( g^p-1,\, gx-xg-g(g-1),\, x^p-x\right)$,
\end{itemize}
with the Hopf algebra structure determined by
\begin{align*}
 \Delta(g)&=g\otimes g,      \hspace{.75in}  \epsilon(g)=1, \quad S(g)=g^{-1},\\
\Delta(x)&=x\otimes 1 + g\otimes x, \quad  \epsilon(x)=0,  \quad S(x)=-g^{-1}x.
\end{align*}
\end{thm}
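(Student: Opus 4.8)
The plan is to split the argument according to the value of $\dim H_0$, which must divide $p^2$ and so lies in $\{1,\,p,\,p^2\}$. If $\dim H_0=p^2$ then $H=H_0=kG$ with $|G|=p^2$, so $G\cong C_{p^2}$ or $C_p\times C_p$. If $\dim H_0=1$ then $H$ is connected and is one of A1--A8 by Proposition~\ref{connected}. All the work therefore concerns the case $\dim H_0=p$, where $H_0=k\langle g\rangle\cong kC_p$ with $g^p=1$; the goal is to produce a single element $x$ with $H=k\langle g,\,x\rangle$ and to determine its relations.

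Since $\dim H>\dim H_0$ we have $H_1\supsetneq H_0$ (otherwise $H_1=H_0$ would force $H=H_0$), so by Theorem~\ref{WTthm} some $\P'_{g^u,\,g^v}\neq 0$. Left multiplication by the group-like $g^{-u}$ carries $\P_{g^u,\,g^v}$ isomorphically onto $\P_{1,\,g^{v-u}}$, so I may assume a nontrivial class in $\P_{1,\,g^s}/k(1-g^s)$, and I would then split on $\P_{1,\,1}$. First, if $\P_{1,\,1}\neq 0$ (so $\dim\P_{1,\,1}=1$ by Lemma~\ref{DimPrimSpace}(ii)), Lemma~\ref{ConjugationScalar}(ii) with $g=h$ lets me choose a primitive $x\notin H_0$ commuting with $g$; then $x^p$ is again primitive, so $x^p\in\P_{1,\,1}=kx$, i.e. $x^p=\lambda x$, and rescaling $x$ (using that $k$ is algebraically closed to solve $\mu^{p-1}=\lambda^{-1}$) normalizes $\lambda\in\{0,1\}$, giving the commutative cocommutative algebras B1 ($x^p=0$) and B2 ($x^p=x$). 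Second, if $\P_{1,\,1}=0$ then every nonzero class is genuinely skew ($s\neq 0$); replacing $g$ by the generator $g^s$ I may assume $\Delta(x)=x\otimes 1+g\otimes x$, and Lemma~\ref{ConjugationScalar}(ii) gives $gxg^{-1}=x+\gamma(1-g)$ for some $\gamma\in k$.

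Within this skew case I would split on whether $H$ is commutative. If $gx=xg$ (that is $\gamma=0$), then $x\otimes 1$ and $g\otimes x$ commute in $H\otimes H$, so the vanishing of the middle binomial coefficients in characteristic $p$ yields $\Delta(x^p)=x^p\otimes 1+1\otimes x^p$; hence $x^p\in\P_{1,\,1}=0$ and $H$ is B3. If $H$ is noncommutative then $\gamma\neq 0$, and rescaling $x$ normalizes the relation to $gx-xg=g(g-1)$. The crux is then to compute $\Delta(x^p)=(x\otimes 1+g\otimes x)^p$ inside the noncommutative algebra $H\otimes H$: writing $X=x\otimes 1$ and $W=g\otimes x$, one has the Jordanian relation $XW-WX=((1-g)\otimes 1)\,W$, so the freshman's dream fails. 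I would instead invoke Jacobson's $p$-th power formula and show that all its correction terms vanish except $s_{p-1}$, which equals $-P_{p-1}(g)\otimes x$ for an explicit polynomial obtained by iterating the operator $L=g(1-g)\tfrac{d}{dg}$ (with $P_1=g^2-g$ and $P_{n+1}=L(P_n)$, so that $P_{p-1}=-L^{p-1}(g)$).

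The key identity is $P_{p-1}(g)\equiv 1-g\pmod{g^p-1}$ in characteristic $p$. I would prove it by passing to $t=g-1$, where $t^p=0$ and $L=-(1+t)\,t\,\tfrac{d}{dt}$ acts on $\mathrm{span}(t,t^2,\dots,t^{p-1})$ as a bidiagonal operator with distinct nonzero eigenvalues $-1,-2,\dots,-(p-1)$; it is therefore diagonalizable, and by Fermat's little theorem $L^{p-1}=\mathrm{id}$ on this span, so $L^{p-1}(t)=t$ and $P_{p-1}(g)=-(g-1)=1-g$. Granting this, $\Delta(x^p)=x^p\otimes 1+1\otimes x^p+(g-1)\otimes x$, whence a short computation shows $x^p-x$ is primitive, so $x^p-x\in\P_{1,\,1}=0$ and $H$ is B4. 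Finally, in every case $k\langle g,\,x\rangle$ is a Hopf subalgebra whose dimension divides $p^2$ by the Nichols--Zoeller theorem and exceeds $p$, hence equals $\dim H$, so the displayed relations present $H$; the fourteen algebras are then checked to be pairwise non-isomorphic, being separated by $\dim H_0$, by commutativity and cocommutativity, and by the restricted $p$-operation on their primitives. I expect the noncommutative $p$-th power identity $P_{p-1}(g)=1-g$ to be the main obstacle.
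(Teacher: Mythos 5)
Your proposal is correct and follows essentially the same route as the paper: the same case split on $\dim H_0$ and then on $\P_{1,\,1}$, the same appeal to Theorem \ref{WTthm} and Lemmas \ref{DimPrimSpace} and \ref{ConjugationScalar}, and the same use of Jacobson's $p$-th power formula to reduce everything to the identity $(\ad x)^{p-1}(g)=g-1$. The only substantive difference is that you establish this identity by passing to $t=g-1$ and noting that the derivation acts on $\mathrm{span}(t,\dots,t^{p-1})$ with distinct nonzero eigenvalues $-1,\dots,-(p-1)$ (so its $(p-1)$-st power is the identity there by Fermat), which is a cleaner computation than the paper's explicit $p\times p$ matrices $P$ and $P^{-1}$; your closing Nichols--Zoeller argument that $k\langle g,\,x\rangle=H$ makes explicit a point the paper leaves implicit.
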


\begin{proof}
Assume $\chara k= p$. Note that $H$ is connected if and only if $\dim H_0=1$ and that $H=H_0$ if and only if $H$ is a group algebra of order $p^2$. Hence, it is sufficient to show, when $\dim H_0=p$, that $H$ is isomorphic to a Hopf algebra described in the theorem. Again, $H_0=k\langle g \rangle$ for some  $g^p=1$. By Theorem \ref{WTthm}, we have
\[H_0\subset H_1=H_0\oplus \Big(\oplus_{g,\,h\in G}\, \P'_{g,\,h}\Big)\] where $\P'_{g,\,h}$ is a subspace of $\P_{g,\,h}$ such that
$\P_{g,\,h}=\P'_{g,\,h} \oplus k(g-h)$.

\textbf{Case} $\P_{1,\,1}  \neq 0$.
For any $x\in \P_{1,\, 1}$, by $\Delta(x)=x\otimes1 + 1\otimes x$, we have $gxg^{-1}\in \P_{1,\, 1}$.
Moreover, by Lemma \ref{DimPrimSpace}  (ii), $\P_{1,1}$ is an one-dimensional $kG$-module by conjugation.
Every simple $kG$-module is trivial, since $G$ is a $p$-group and $\chara k=p$.
Hence, $gxg^{-1}=x$ for any $x\in \P_{1,\, 1}$.
On the other hand, we choose nonzero $x\in \P_{1,\,1}$.
Then any element in $\P_{1,\,1} $ must be in the form $\lambda x$ for some $\lambda \in k$.
Since $x\in \P_{1,\,1}$ and $\chara k=p$, we see that $\Delta(x^p)=x^p\otimes 1 + 1\otimes x^p$, and so $x^p\in \P_{1,\,1} $.
This implies that $x^p= \alpha x$ for some $\alpha \in k$.
As $\bar{k}=k$, we can rescale $x$ properly so that $x^p=0$ or $x^p=x$.
Therefore, $H$ must be a Hopf algebra generated by the group-like element $g$ and the (1,1)-primitive element $x$ subject to the relations
\[g^p-1,\, gx-xg,\, x^p-x\]
or
\[g^p-1,\, gx-xg,\, x^p.\]
It is easy to verify that the comultiplication, counit, and antipode must be as those described in (B1) and (B2).

\textbf{Case} $\P_{1,\, 1} =0$. By Theorem \ref{WTthm}, there exist integers $u$ and $v$ such that
\[\P_{g^u,\, g^v} /k(g^u-g^v) \neq 0.\]
By left multiplicating $g^{-u}$ to $x$ and reselecting the generator of $G$ if necessary, we can assume $u=0$ and $v=1$. Then, by Lemma \ref{ConjugationScalar} (ii), there exists $x\in \P_{1,\,g} \setminus H_0$ such that $gx-xg=\lambda g(g-1)$ for some $ \lambda\in k$. If $\lambda=0$, we have $g$ and $x$ are commutative, and then $\Delta(x^p)=x^p\otimes 1 + 1\otimes x^p$. Thus $x^p\in \P_{1,\,1} $, and so $x^p=0$. This gives the Hopf algebra described in (B3).
If $\lambda \neq 0$, we can assume by rescaling that $\lambda=1$, that is, $[g,x]=g(g-1)$. By \cite[Chapter V Section 7, 186--187]{Jac}, we have
\[\Delta(x^p)= (x\otimes 1 + g\otimes x)^p
=x^p\otimes 1 + 1\otimes x^p+ (\textbf{ad}\, x)^{p-1} (g)\otimes x,\] where $\textbf{ad}\, x: g \mapsto [x,g]=g(1-g)$ is a $k$-derivation on $kG$. To find  $(\textbf{ad}\, x)^{p-1} (g)$, we consider the matrix of the map $\textbf{ad}\, x$ under the basis $1,\, g,\, g^2,\, \ldots,\, g^{p-1}$ of $kG$,
{\small
\[T = \left(%
\begin{array}{cccccccc}
 0& 0 &  0 &  0 &\ldots &0 &0 &-(p-1) \\
 0 & 1 & 0 & 0 &\ldots &0&0 &0 \\
 0 & -1 & 2 & 0 &\ldots &0 &0 &0   \\
 0 & 0 & -2 & 3 &\ldots &0&0 &0   \\
 \vdots & \vdots & \vdots & \vdots  & \ddots &\vdots &\vdots &\vdots \\
 0 & 0 & 0 & 0 &\ldots &  -(p-3) &(p-2)& 0 \\
 0 & 0 & 0 & 0 & \ldots &0 & -(p-2) & (p-1) \\
\end{array}%
\right).
\]
}
The matrix $T$ can be diagonalized as
{\small
\[PTP^{-1} =
\left(
 \begin{array}{ccccc}
   0  & \ &\  &  \\
    & 1 &\  & \ \\
    &\ & 2\\
    &  & & \ddots\\
    & & & & (p-1)
 \end{array}
\right) \quad \quad \text{by}\]
}
{\small
\begin{align*}
P &= \left(%
\begin{array}{ccccccccc}
 1 &  1 & 1 & 1 &1 & \ldots & 1 &1  \\
 0 & 1 & 0 &0 &0 &\ldots &0 &0\\
 0 &-1 & 1 & 0 &0 &\ldots &0 & 0  \\
 0&1 & -2 & 1 &0 &\ldots & 0 & 0\\
 0&-1 & 3 & -3 &1 &\ldots & 0 & 0  \\
 \vdots & \vdots & \vdots & \vdots &\vdots &\ddots &\vdots &\vdots   \\
0 & 1 & -\binom{p-3}{1} &\binom{p-3}{2} & -\binom{p-3}{3}&\ldots &1 & 0\\
0& -1 & \binom{p-2}{1} &- \binom{p-2}{2}  & \binom{p-2}{3} & \ldots & -\binom{p-2}{p-3}  & 1 \\
\end{array}%
\right)\quad \text{and} \\
P^{-1}&= \left(%
\begin{array}{ccccccccc}
 1 &  -\binom{p-1}{1} & -\binom{p-1}{2} & -\binom{p-1}{3} &-\binom{p-1}{4} & \ldots & -\binom{p-1}{p-2} &-1  \\
 0 & 1 & 0 &0 &0 &\ldots &0 &0\\
 0 &1 & 1 & 0 &0 &\ldots &0 & 0  \\
 0&1 & 2 & 1 &0 &\ldots & 0 & 0\\
 0&1 & 3 & 3 &1 &\ldots & 0 & 0  \\
 \vdots & \vdots & \vdots & \vdots &\vdots &\ddots &\vdots &\vdots   \\
0 &1 & \binom{p-3}{1} &\binom{p-3}{2} & \binom{p-3}{3}&\ldots &1 & 0\\
0& 1 & \binom{p-2}{1} & \binom{p-2}{2}  & \binom{p-2}{3} & \ldots & \binom{p-2}{p-3}  & 1 \\
\end{array}%
\right).
\end{align*}
}
Since $\chara k=p$, by Fermat's Little Theorem, we have
\[\big(PTP^{-1}\big)^{p-1}=I-E_{11},\quad \text{and\; so} \quad T^{p-1}=I-P^{-1}E_{11}P,\]
where $I$ is the identity matrix and where $E_{11}$ is the elementary matrix with the (1,1) entry being 1 and zeros elsewhere.
Then, it is easy to check that
\[(\textbf{ad}\, x)^{p-1} (g) =T^{p-1}(g)=[I-P^{-1}E_{11}P] (g) =g-1.\]
Now we have
\[\Delta(x^p)=x^p\otimes 1 + 1\otimes x^p+ (\textbf{ad}\, x)^{p-1} (g) \otimes x=x^p\otimes 1 + 1\otimes x^p+ (g-1)\otimes x.\]
Then, direct computation
\begin{align*}
\Delta(x^p-x)&=\Delta(x^p)-\Delta(x)\\
                &=x^p\otimes 1 + 1\otimes x^p+ (g-1)\otimes x -x\otimes 1 - g\otimes x\\
                &=(x^p-x)\otimes 1 + 1\otimes(x^p-x)
\end{align*}
shows that $x^p-x \in \P_{1,\,1} $ and so $x^p-x=0$. Therefore, $H$ must be a Hopf algebra described in (B4).
\end{proof}

\end{document}